\newtheorem{defn}{Definition}
\newtheorem{thm}{Theorem}
\newcommand{\Lun}{L^1}
\newcommand{\Lunloc}{L^1_{loc}}
\newcommand{\Ldeloc}{L^2_{loc}}
\newcommand{\Linfloc}{L^{\infty}_{loc}}
\newcommand{\tr}{\chi_{E<m}}
\newcommand{\phieps}{\phi_{\epsilon}}
\newcommand{\RR}{\mathbb{R}}
\newcommand{\NN}{\mathbb{N}}
\newcommand{\Com}{C^\infty_0}
\newcommand{\CCC}{C}
\newcommand{\Linf}{L^\infty}
\makeatletter \@addtoreset{equation}{section} \makeatother
\newcommand{\cqfd}{{\unskip\kern 6pt\penalty 500
\raise -2pt\hbox{\vrule\vbox to 6pt{\hrule width 6pt
\vfill\hrule}\vrule}\par}}
\title{On Liouville transport equation with a force field in $BV_{loc}$}
\author{Maxime Hauray}
\address{M. Hauray: Universit\'e Paris-Dauphine, et CNRS}
\email{maxime.hauray@univ-amu.fr}
\begin{document}

\subjclass[2000]{35R05, 35F99}

\keywords{ODE and transport equation, low regularity vector-field, $N$ 
particle systems.}

\begin{abstract} We prove the existence and uniqueness of renormalized 
solutions of the Liouville equation for $n$ particles with a interaction 
potential in $BV_{loc}$ execpt at the origin. This implies the existence and 
uniqueness of a a.e. flow solution of the associated ODE.
\end{abstract}

\maketitle

\section{Introduction}
We consider the Liouville (or transport) equation
\begin{equation} \label{liou_nc}
\frac{\partial f}{\partial t} + \sum_{i=1}^{n} v_i \cdot \nabla_{x_i}
  f - \sum_{i \neq j} \nabla V(x_i -x_j) \cdot \nabla_{v_i}
  f = 0
\end{equation}
with initial conditions
\begin{equation} \label{init_nc}
f(0,x_1,\dots,x_n,v_1,\dots,v_n)=f^0(x_1,\dots,x_n,v_1,\dots,v_n)
\end{equation}
Here $n \in \NN$, each $x_i$ and $v_i$ belongs to $\RR^d$ for some $d \geq 1$, $f$ is a real function defined on $[0,\infty) \times \RR^{2dn}$. We shall  always assume that the interaction potential $V$ is such that $\nabla V \in BV_{loc}(\RR^d - 0)$. Our goal here is to show the existence and the uniqueness of solutions (in a sense to be made more precise) of (\ref{liou_nc})-(\ref{init_nc}) for each $f^0 \in \Lunloc(\RR^{2dn})$.

As is well know, this equation is in some sense equivalent to the system of ODE
\begin{equation} \label{ODEnpart}
\left\{ \begin{array}{l} 
\dot{X}_i(t)=V_i \\
\dot{V}_i(t)= - \sum_{i\neq j} \nabla V(X_i(t) -X_j(t))
\end{array} \right. \qquad \forall \; 1 \leq i \leq n
\end{equation}
We shall also prove the existence and the uniqueness of a flow
solution of (\ref{ODEnpart}) (in a sense to be made more precise).

Here, we will use the method of resolution of transport equations and
associated ODE introduced by R. DiPerna and P.L. Lions in 1989 in \cite{DPL}. In
this paper, they prove the existence and the uniqueness of the solution of 
a transport equation 
when the vector field belongs to $W^{1,1}_{loc}$, and use this to obtain a 
unique flow solution of the ODE. In the note \cite{Lio}, 
P.L. Lions 
extend this result to piecewise $W^{1,1}$ vector-field and give a clearer
proof of the equivalence between the existence and uniqueness of a
solution of the transport equation and the existence and the
uniqueness of a flow solution of the ODE. In \cite{Bou}, F. Bouchut
extend the result to the kinetic case with a force field in $BV_{loc}$. 
We will often use this result in this article (see theorem 1). In the
case of two dimensionnal vector-field, we also refer to the work of
F. Bouchut and L. Desvillettes \cite{BD} in which the case of
divergence free vector-field with continuous coefficient is treated, and to my
precedent work \cite{Hau1} in which this result is extented to
vector-field with $\Ldeloc$ coefficients with a condition of
regularity on the direction of the vector-field, and to the one
dimensionnal kinetic case with a force in $\Lunloc$. The most recent work \cite{Amb} in 
this domain is from L. Ambrosio and extend the existence and uniqueness result to $BV_loc$
vector field.

Let us now define precisely what we mean by a solution of (\ref{liou_nc})-(\ref{init_nc}). 
\begin{defn}
Given an initial condition in $\Linf$, a solution of
(\ref{liou_nc})-(\ref{init_nc}) is a function $f \in \Linf([0,\infty)
\times \RR^{2dn})$ satisfying for all $\phi \in \Com([0,\infty)
\times \RR^{2dn} - I)$
\begin{multline} \label{dist}
\int_{[0,\infty) \times \RR^{2dn}} f\left( {\frac{\partial \phi}{\partial
      t} + \sum_{i=1}^{n} v_i \cdot \nabla_{x_i} \phi - \sum_{i \neq
      j} \nabla V(x_i -x_j) \cdot \nabla_{v_i} \phi} \right) \\ =
-\int_{\RR^{2dn}} f^0 \phi(0,\cdot)
\end{multline}
where $I=\{(x_1,\dots,x_n,v_1,\dots,v_n)|\exists i \neq j , \quad
x_i=x_j\}$, the set of all configurations in which at least two
particles are at the same place.

We will also use the notion of solution on the whole space. By this
we mean a function $f \in \Linf([0,\infty)
\times \RR^{2dn})$ satisfying (\ref{dist}) for all $\phi \in \Com([0,\infty)
\times \RR^{2dn})$.
\end{defn}

Remark that usually, the definition of solutions is the second
one. Indeed, as we shall see later, this two definitions are
equivalent if $\nabla V \in \Lun$ near the origin. But we shall also
deal with potentials that do not satisfy this condition, and then
the quantities in (\ref{dist}) are not defined for any $\phi \in
\Com$. This is why we introduce this two notions of
solutions. Moreover, we want to find solutions for every initial
conditions in $\Lunloc$, but with this assumption, the products in
(\ref{dist}) are not necessary well defined. Thus, we introduced as in
the work of P.L. Lions and R. DiPerna the notion of renormalized solution 
defined below.

\begin{defn}
We shall say that a mesurable function $f$ is a renormalized solution
(resp. a renormalized solution on the whole space) if $\beta(f)$ is a
solution (resp. a solution on the whole space) of
(\ref{liou_nc}) with initial conditions $\beta(f^0)$, for all $\beta \in
\CCC^1_b(\RR)$, the set of differentiable functions from $\RR$ into
$\RR$ with a bounded continuous derivative.

\end{defn}

In our proof, we will often use the following result, proved by F. Bouchut 
in \cite{Bou},
\begin{thm} \label{bouchut}
Let $f \in \Linf$ be a solution of the following equation on 
$\Omega \times \RR^m$, where $\Omega$ is an open subset of $\RR^m$
\begin{equation} \label{kin}
\partial_t f + v \cdot \nabla_x f + F(t,x) \cdot \nabla_v f =0
\end{equation}
where the force field $F$ belongs to $BV_{loc}(\Omega)$. Then, this
solution is also a renormalized one. In other words, $\beta(f)$ is
also a solution of the equation (\ref{kin}) for every $\beta \in
\CCC^1_b(\RR)$.
\end{thm}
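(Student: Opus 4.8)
The plan is to follow the regularization-by-convolution argument of DiPerna and Lions, modified to accommodate the $BV$ regularity of the force field. The phase-space velocity field here is $b(t,x,v)=(v,F(t,x))$, which is divergence free in $(x,v)$ since $v$ does not depend on $x$ and $F$ does not depend on $v$. First I would mollify the equation: for a mollifier $\rho_\ep$ acting on the variables $(x,v)$, set $f_\ep=f*\rho_\ep$. Convolving \eqref{kin} against $\rho_\ep$ and commuting the convolution past the differential operators produces
\[
\partial_t f_\ep + v\cdot\nabla_x f_\ep + F\cdot\nabla_v f_\ep = r_\ep,
\]
where $r_\ep$ is the commutator remainder. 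Since $f_\ep$ is smooth in $(x,v)$, the chain rule applies exactly and gives, for every $\beta\in\CCC^1_b(\RR)$,
\[
\partial_t\beta(f_\ep)+v\cdot\nabla_x\beta(f_\ep)+F\cdot\nabla_v\beta(f_\ep)=\beta'(f_\ep)\,r_\ep.
\]
Because $\beta'$ is bounded and $\beta(f_\ep)\to\beta(f)$ in $\Lunloc$, the whole statement reduces to proving that $r_\ep\to 0$ in $\Lunloc$: passing to the limit then shows that $\beta(f)$ solves \eqref{kin}, which is exactly the renormalization property.

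Next I would split the commutator according to the two transport directions, $r_\ep=r_\ep^x+r_\ep^v$, coming respectively from $v\cdot\nabla_x$ and from $F\cdot\nabla_v$, and work with an anisotropic mollifier $\rho_\ep=\rho^x_\ep\otimes\rho^v_\delta$ with two scales $\ep,\delta$ to be tuned. The $x$-part is the easy one: the coefficient $v$ is linear, hence smooth with bounded gradient. Integrating by parts against the mollifier, $r_\ep^x$ is of the form $-\int f(\cdot-y,\cdot-w)\,\bigl(w\cdot\nabla_y\rho^x_\ep(y)\bigr)\rho^v_\delta(w)\,dy\,dw$, so that $\|r_\ep^x\|_{\Lunloc}\lesssim (\delta/\ep)\,\|f\|_{\Linf}$, which tends to $0$ provided $\delta\ll\ep$. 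The genuinely delicate term is $r_\ep^v$, where the coefficient $F$ is only $BV_{loc}$, so its distributional gradient $DF$ carries a singular part and the usual $W^{1,1}$ commutator lemma is unavailable.

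The main obstacle, therefore, is the force commutator $r_\ep^v$. Since $F$ depends only on $(t,x)$, convolution in $v$ alone produces no $F$-commutator, and one finds $r_\ep^v=\int\bigl[F(t,x-y)-F(t,x)\bigr]\cdot f(\cdot-y,\cdot-w)\,\rho^x_\ep(y)\,\nabla_w\rho^v_\delta(w)\,dy\,dw$ after transferring the $\nabla_v$ onto the mollifier. The crude $BV$ estimate $\|F(\cdot-y,t)-F(\cdot,t)\|_{\Lunloc}\le |y|\,|DF|$ then gives $\|r_\ep^v\|_{\Lunloc}\lesssim (\ep/\delta)\,\|f\|_{\Linf}\,|DF|$, which vanishes only if $\ep\ll\delta$. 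This is incompatible with the condition $\delta\ll\ep$ required above, and this scaling tension is precisely why the $W^{1,1}$ theory does not extend to $BV$ by soft arguments: the two transport directions cannot be decoupled by a crude separation of scales.

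The way out, following Bouchut, is a finer commutator estimate that does not sacrifice the joint structure. I would exploit three facts: (i) $F$ is independent of $v$, so the only error comes from the $x$-difference $F(t,x-y)-F(t,x)$ paired with a $v$-derivative; (ii) the singular part of the measure $DF$ is rank-one (Alberti's theorem), hence pointwise aligned along a single $x$-direction; and (iii) this fixed $x$-direction is transverse to the $v$-direction carried by $\nabla_w\rho^v_\delta$. Writing the difference quotient through $DF$ and averaging against the mollifier, the mismatch between the direction of the singular part of $DF$ and the $v$-derivative produces a cancellation, so that the singular contribution to $r_\ep^v$ vanishes along a suitably chosen joint scaling of $\ep$ and $\delta$; the absolutely continuous part is handled as in the classical $W^{1,1}$ estimate. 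The hard part is exactly this: controlling $r_\ep^v$ without the incompatible scale separation, and it rests entirely on the special kinetic coupling between the smooth but unbounded coefficient $v$ in the $x$-variable and the $BV$ coefficient $F$ acting only in the $v$-variable. Once $r_\ep\to 0$ in $\Lunloc$ is established, the conclusion follows by passing to the limit in the renormalized equation for $f_\ep$.
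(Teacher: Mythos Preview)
First, note that the paper does not prove this theorem at all: it is quoted as a black box from Bouchut's article \cite{Bou}, so there is no ``paper's own proof'' to compare with. What you have written is therefore an attempted reconstruction of Bouchut's argument, and it should be judged on its own merits.

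Your set-up is correct and you have located the genuine difficulty precisely: the DiPerna--Lions commutator scheme, the anisotropic mollifier $\rho^x_\ep\otimes\rho^v_\delta$, the splitting $r_\ep=r_\ep^x+r_\ep^v$, and the incompatible scaling demands $\delta\ll\ep$ versus $\ep\ll\delta$ are all exactly right. The gap is in your resolution. Your claim (iii), that the rank-one direction of $D^sF$ is ``transverse to the $v$-direction carried by $\nabla_w\rho^v_\delta$'' and that this produces a cancellation, does not hold up. In the expression
\[
r_\ep^v=\int\bigl[F(t,x-y)-F(t,x)\bigr]\cdot\nabla_w\rho^v_\delta(w)\,f(x-y,v-w)\,\rho^x_\ep(y)\,dy\,dw,
\]
both $F(x-y)-F(x)$ and $\nabla_w\rho^v_\delta(w)$ are vectors in the same copy of $\RR^m$, and Alberti's theorem only tells you that $D^sF=\eta\otimes\xi\,|D^sF|$ for some unit vectors $\eta,\xi\in\RR^m$; the singular contribution to the integrand then contains the scalar factor $\eta\cdot\nabla_w\rho^v_\delta(w)$, which has no reason to vanish or to be small. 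There is no transversality to exploit here, and no joint scaling of $(\ep,\delta)$ is singled out by your argument. This is the step where the proof actually lives, and as written it is missing.

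Two remarks on how to close the gap. Bouchut's own proof does \emph{not} use Alberti's theorem; it exploits the kinetic structure more directly, in particular the fact that mollifying in $v$ alone produces no commutator from $F\cdot\nabla_v$, while the remaining commutator from $v\cdot\nabla_x$ is an $x$-divergence of a quantity of size $O(\delta)$, which can then be handled by a second regularization in $x$ together with the equation. Alternatively, if you are willing to invoke Ambrosio's later result \cite{Amb} for general $BV$ vector fields (which \emph{does} rest on Alberti), you can simply observe that $b(x,v)=(v,F(t,x))$ lies in $BV_{loc}$ with $\dv_{(x,v)}b=0$ absolutely continuous, and quote that theorem; but then the rank-one structure is used inside Ambrosio's proof in a far more delicate way than a one-line transversality claim.
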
 

This kind of result is very useful, because it implies the existence
and the 
uniqueness of the solution of the transport equation and of a flow
solution of the associated ODE (in a sense which will be defined later
on). Here, we shall extend the result of F. Bouchut to vector-fields
with one singularity at the origin.

\section{Existence and uniqueness of the solutions of the Liouville equation}
\begin{thm}
Assume that $d \geq 2$, $\nabla V \in BV_{loc}(\RR^d-0)$, $\nabla
V \in  \Lunloc$ near the origin, and that there exists a positive
constant $C$ such that $V(x) \geq -C(1+|x|^2)$ a.e.. Then, for every 
initial condition in $\Lunloc$, there exists a unique renormalized 
solution to (\ref{liou_nc})-(\ref{init_nc}). 
\end{thm}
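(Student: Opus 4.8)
The plan is to follow the DiPerna--Lions programme: first construct a renormalized solution by regularization and a weak compactness argument, then obtain uniqueness from the renormalization property itself. Throughout, the guiding structural fact is that the phase-space vector field $b(t,x,v)=(v,F(t,x))$, with $F_i=-\sum_{j\neq i}\nabla V(x_i-x_j)$, is divergence free, since $\dv_x v=0$ and $\dv_v F=0$; this Liouville (Hamiltonian) structure is what will let me propagate $L^1$ and $L^\infty$ bounds on renormalizations of $f$.

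For existence, I would regularize by replacing $\nabla V$ with a smooth, globally defined $\nabla V_\ep$ agreeing with $\nabla V$ away from the origin and converging to it in $\Lunloc(\RR^d)$. The classical (smooth-coefficient) Liouville equation then has a unique solution $f_\ep$ given by transport along the smooth flow, with $\|\beta(f_\ep)\|_\infty\le\|\beta(f^0)\|_\infty$ for every $\beta\in C^1_b$. The lower bound $V(x)\ge -C(1+|x|^2)$ provides the a priori control: conservation of the total energy $E=\sum_i\frac12|v_i|^2+\sum_{i<j}V(x_i-x_j)$ bounds the kinetic energy by a quantity growing at most quadratically in the positions, which both rules out escape of velocities to infinity in finite time and yields the velocity integrability $\int |v|\,|f_\ep|\,dv$ on compact position sets that I shall need later. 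Extracting a weak-$\ast$ limit $f$ of $\beta(f_\ep)$ and passing to the limit in the weak formulation \eqref{dist} is then routine: the test functions are supported in the complement of $I$, on which each $x_i-x_j$ stays bounded away from $0$, so $\nabla V_\ep\to\nabla V$ there in $\Lunloc$ and the force term converges.

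The renormalization property is where the local regularity of $\nabla V$ enters. On the open set $\Omega\times\RR^{dn}$, with $\Omega=\RR^{dn}\setminus\{\exists\,i\neq j:\ x_i=x_j\}$, the force $F$ belongs to $BV_{loc}(\Omega)$, so Theorem \ref{bouchut} applies verbatim and shows that any bounded solution is renormalized there, i.e. for test functions avoiding $I$. Since moreover $\nabla V\in\Lunloc$ near the origin, the force is locally integrable across $I$ and the singular set carries no distributional mass for the equation; consequently a renormalized solution is automatically a renormalized solution on the whole space, and the two notions of solution coincide. This reduces the problem to the standard whole-space setting.

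It remains to prove uniqueness, which I regard as the main obstacle. By linearity it suffices to show that a renormalized solution with $f^0=0$ vanishes. Fixing $\beta\in C^1_b$ with $\beta(0)=0$, the renormalized equation for $\beta(f)$ is tested against a product cutoff $\chi_R(x,v)\,\chi_\delta(x)$, where $\chi_R$ truncates large positions and velocities and $\chi_\delta$ vanishes on a $\delta$-neighbourhood of $I$. Using that $b$ is divergence free, integration gives $\int\beta(f(t))\chi_R\chi_\delta = \int_0^t\!\!\int \beta(f)\,b\cdot\nabla(\chi_R\chi_\delta)$, the right-hand side being the only obstruction to conservation. The $\chi_R$ error is removed first, using the energy bound to control escape; the delicate term is the one coming from $\chi_\delta$. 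Here the key point is that $I$ is cut off in the position variables alone, so $\nabla_v\chi_\delta=0$ and the singular force never multiplies $\nabla\chi_\delta$: the only commutator is $\int\beta(f)\,v\cdot\nabla_x\chi_\delta$, supported in a thin shell around $I$. Choosing $\chi_\delta$ adapted to the distance $|x_i-x_j|$, one has $\int|\nabla_x\chi_\delta|\,dx\lesssim\delta^{d-1}$, and since $d\ge2$ this tends to $0$; multiplied by the bounded factor $\int|v|\,|f|\,dv$ furnished by the energy estimate, the commutator vanishes as $\delta\to0$. We conclude $\int\beta(f(t))=0$ for all such $\beta$, hence $f\equiv0$. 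The whole argument hinges on the interplay between the codimension $d\ge2$ of the collision set and the energy control of the velocities, and this commutator estimate near $I$ is the step I expect to require the most care.
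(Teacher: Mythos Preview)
Your overall architecture matches the paper's: existence by regularization, renormalization away from $I$ via Bouchut's theorem, removal of the collision set by the codimension-$d$ argument, and then a cutoff/Gronwall uniqueness on the whole space. The commutator estimate near $I$ is exactly the paper's Step~1, and your observation that only the free-transport term $v\cdot\nabla_x\chi_\delta$ survives (never the singular force) is the right one.

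The genuine gap is in your handling of the cutoff at infinity. You assert that ``the bounded factor $\int|v|\,|f|\,dv$ [is] furnished by the energy estimate'', but in the uniqueness step $f$ is an \emph{arbitrary} $L^\infty$ solution (or $\beta$ of the difference of two such), and no velocity moment is available: energy conservation is a property of characteristics, not an a priori bound on a generic bounded distributional solution. Relatedly, your order of limits ($R\to\infty$ first, then $\delta\to0$) is backwards. The paper first sends $\delta\to0$ against a test function with compact support in $(x,v)$, so that $|v|$ is bounded on the support and the near-$I$ commutator is $O(\mu^{d-1})$; this is its Step~1, which upgrades any $L^\infty$ solution to a solution on the whole space. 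Only then does it address the behaviour at infinity, and here it does \emph{not} use a naive $\chi_R(x,v)$: it tests against
\[
\phi_{R,T}=\psi\bigl(\sqrt{1+\textstyle\sum|x_i|^2}-(R+1)e^{C'(T-t)}-2\bigr)\,\psi(E/R^2),
\]
with the energy $E$ built into the cutoff. Because $E$ is a first integral of the Hamiltonian vector field, all force terms cancel when the transport operator hits $\psi(E/R^2)$, and on the support of the derivative one has $E\le 2R^2$ together with $|x_i|\lesssim R$, hence $|v_i|\lesssim R$ by the lower bound on $V$. This is precisely the device that replaces your unjustified moment bound; without it the $\chi_R$ error terms do not close.
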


Remark that with our assumptions, the potentials is bounded by below. 
So, we do not deal with the case of the attractive coulombian potential, 
by instance.

\begin{proof}
First, we will prove that in this case, a bounded solution is always a
solution on the whole space.
 
{\bf Step 1.} Equivalence between the two notions of solutions. \\
Let $f \in \Linf(\RR \times \RR^{2dn})$ be a solution of
(\ref{liou_nc}). We want to prove that it is also a solution on the
whole space. In order to show this fact, we choose a $\phi \in
\Com(\RR^d)$ such that $(1-\phi)$ has his support in $B(0,1)$, the 
open ball of radius one centered at the origin, and that $\int 
(1-\phi) =1$, and we defined, for every $\epsilon > 0$, $\phieps 
= \phi(\cdot / \epsilon)$. Moreover, we denote
$$ 
\Phi(x_1,\dots,x_n)= \prod \phi_{\epsilon_{i,j}}(x_i - x_j) 
$$ 
where the product run over all the set of two indices $\{i,j\}$ except the set 
$\{1,2\}$, and where $\epsilon_{i,j}$  depends of $\{i,j\}$. We 
also choose an $\epsilon_{1,2}$ that we will denote by $\mu$ for 
simplification in the following. 

Next, we choose a test function $\Psi \in \Com(\RR \times \RR^{2dn})$. 
Since $\Psi \Phi \phi_{\mu}(x_1-x_2)$ has a compact support in $\RR \times 
(\RR^{2dn}-I)$, we can  write, 
\begin{multline} \label{one}
\int_{\RR \times \RR^{2dn}} f \phi_{\mu}(x_1-x_2)
\left({\frac{\partial \Psi}{\partial t} \Phi + \sum_{i=1}^n v_i \cdot
    \nabla_{x_i} (\Psi \Phi) + 
 \sum_{i \neq j} \Phi \nabla
    V(x_i-x_j) \cdot \nabla_{v_i} \Psi }\right)\\
+ \int_{\RR \times \RR^{2dn}} f \Phi \Psi \frac{1}{\mu}
\phi(\frac{x_1-x_2}{\mu}) \cdot (v_1-v_2) = 0
\end{multline}
When $\mu$ goes to $0$, the first integral goes to 
$$
\int_{\RR \times \RR^{2dn}} f
\left({\frac{\partial \Psi}{\partial t} \Phi + \sum_{i=1}^n v_i \cdot
    \nabla_{x_i} (\Psi \Phi) + \sum_{i \neq j} \Phi \nabla
    V(x_i-x_j) \cdot \nabla_{v_i} \Psi }\right) 
$$
For the second integral, we may write
\begin{eqnarray*}
\left| {\int_{\RR \times \RR^{2dn}} f \Phi \Psi \frac{1}{\mu}
\phi(\frac{x_1-x_2}{\mu}) \cdot (v_1-v_2) }\right| & \leq &
\frac{M}{\mu} \int_{|x_1-x_2| \leq \mu} \Psi \\
 & \leq & M' \mu^{d-1}
\end{eqnarray*}
and since $d \geq 2$, the second integral goes to zero.

Then, we have that
\begin{equation} \label{phi}
\int_{\RR \times \RR^{2dn}} f\left( {\frac{\partial \Psi}{\partial
      t}\Phi + \sum_{i=1}^{n} v_i \cdot \nabla_{x_i} (\Psi \Phi) -
    \sum_{i \neq j} \Phi \nabla V(x_i -x_j) \cdot \nabla_{v_i}
    \phi} \right) = 0
\end{equation}
Next, we can write $\Phi=\Phi' \phi_{\epsilon{1,3}}$. It is possible only 
if $n \geq 3$, but in the case $n=2$, $\Phi=1$ and we have already prove 
what we want. We make the same argument. Let as above 
$\epsilon_{1,3}$ going to zero and obtain (\ref{phi}) with $\Phi$ replaced 
by $\Phi'$. At this point, we can go on and do this with all the couple 
$(i,j)$, with $i \neq j$. At the end, we can delete $\Phi$ 
in the equality (\ref{one}). We obtain the equation (\ref{dist}).
Then, $f$ is a solution on the whole space.

{\bf Step 2.} Every $\Linf$-solution is a renormalized solution on the
whole space. \\
  Let $f \in \Linf$ be a solution of (\ref{liou_nc}). We choose a $\beta
\in \CCC^1_b(\RR)$. By using the theorem 1 and because the notion of
renormalisation is local, we obtain that $\beta(f)$ is a solution of
(\ref{liou_nc}). But by the step one, we know that $\beta(f)$ is a
solution on the whole space. Since this is true for every $\beta \in
\CCC^1_b(\RR)$, $f$ is a renormalized solution on the whole space.

{\bf Step 3.} Uniqueness for solution in $\Linf([0,+\infty) \times \RR^{2dn})$. \\
 We choose two solutions $f$ and $g \in \Linf([0,+\infty) \times
 \RR^{2dn})$  of (\ref{liou_nc}) with the same initial condition, and a
 $\beta \in \CCC^1_b(\RR,\RR)$, non-negative, with $\beta(0)=0$. By
 step 2 and the linearity of the equation, $h=\beta(f-g)$ is also a
 solution on the whole space of (\ref{liou_nc}) with vanishing initial conditions. 

Next, we choose a function $\psi \in \Com(\RR)$ such that $\psi \equiv 1$ on 
$(-\infty,1)$ and $\psi \equiv 0$ on $(2,+\infty)$. We also define on 
$\RR^{2dn}$ the energy E of a configuration which is given by
$$ 
E(x_1,\dots,x_n,v_1,\dots,v_n)= \frac{1}{2}\sum_{i\neq j} V(x_i-x_j) + 
\sum_{i=1}^n \frac{|v_i|^2}{2} 
$$
Remark that the assumption $V(x) \geq -C(1+|x|^2)$ implies that there
exists another constant $C >0$ such that if $E \leq R^2$ and all the
$|x_i| \leq R$ for all $i$, then $|v_i| \leq C(1+R)$ for all $i$. Roughly, if our 
particles are initially in a bounded
region, their speeds will remain bounded on every compact interval of
time. We will use this fact to prove the uniqueness.
For every $T > 0$ and $R \geq 0$, we define
$\phi_{R,T}=\psi(\sqrt{1+\sum |x_i|^2} -
(R+1)e^{C'(T-t)}-2)\psi(E/R^2)$, with $C'=nC$. $\partial_t \phi_{R,T} \in \Linfloc$,
$\nabla_{x_i} \phi_{R,T} \in \Lun$ and $\nabla_{v_i}\phi_{R,T} \in
\Linfloc$, so we may multiply the distribution $h$ by
the function $\phi_{R,T}$. We compute 
\begin{equation} \label{deriv1}
\frac{\partial (h \phi_{R,T})}{\partial t} = \phi_{R,T} \frac{\partial h}{\partial t}+
\frac{\partial \phi_{R,T}}{\partial t} h 
\end{equation}
and we obtain
\begin{multline} \label{deriv2}
\frac{\partial (h \phi_{R,T})}{\partial t} =  -\phi_{R,T} \left({\sum_{i=1}^{n} v_i 
\cdot \nabla_{x_i} h - \sum_{i \neq j} \nabla V(x_i -x_j) \cdot \nabla_{v_i} h}\right) \\
+ \frac{\partial \phi_{R,T}}{\partial t} h
\end{multline}
 Then, if we integrate this equation with respect to $x$,$v$ and use integration 
by parts, we obtain
\begin{multline} \label{argh}
\frac{\partial}{\partial t} \left({\int_{\RR^{2dn}}
h(t,\cdot)\phi_{R,T}}\right) = \\
\int_{\RR^{2dn}} h(t,\cdot) \psi'(\sqrt{1+\sum |x_i|^2} -(R+1)e^{C'(T-t)}-2)
\psi(E/R^2) \times \dots  \\
 (\sum v_i \cdot B_i - C(R+3)e^{C(T-t)})
\end{multline}
where the term
$|B_i|=|\partial_i(\sqrt{(1+\sum|x_i|^2})|=|x_i|/\sqrt{(1+\sum|x_i|^2}$
is bounded by $1$. It is useful there to
use the energy in the test function because many terms vanish when we
perform the computation, since $E$ is invariant by the flow. Now, when 
$\Phi_{R,T}$ do not vanish, it means that $E \leq R^2$ and $|x_i| \leq C R$ for all $i$.
Then, we deduce that we have
\begin{equation}
\frac{\partial}{\partial t} \left({\int_{\RR^{2dn}} h(t,\cdot)\phi_{R,T}}\right) \leq 0
\end{equation}
because when $\Phi_{R,T}$ do not vanish, $E \leq R^2$ and $|x_i| \leq R$
for all $i$. And in this conditions we have  $\sum v_i \cdot B_i -
C(R+3)e^{C(T-t)}\leq 0$ and $\psi'$ is nonpositive. Since $h$
vanishes at $t=0$, this means that
$$ \int_{\RR^{2dn}} h(T,\cdot)\phi_{R,T} =0 $$
Since this is true for every $R$ and every $T$, and since $h$ is
nonnegative, we obtain that $h$ vanishes almost everywhere on $[0,\infty)
\times \RR^{2dn}$. This is true for every $\beta \in \CCC^1_b(\RR)$ 
satisfying $\beta(0)=0$. Therefore, $f=g$ a.e..

{\bf Step 4.} Existence and uniqueness for initial conditions in
$\Lunloc$. \\
First, we remark that if $f^0 \in \Linf$, it is easy to obtain a
solution on the whole space of (\ref{liou_nc})-(\ref{init_nc}) by
regularisation of the force field and the use of weak limits. In
addition, in view of the result obtained in step 3, we obtain that,
if  $f^0 \in \Linf$, there exists a unique solution of
(\ref{liou_nc})-(\ref{init_nc}) which is also a renormalized solution on
the whole space.

Next, let $f^0 \in \Lunloc$. For $m \in \NN$, we define $\beta_m(x)=(x
\wedge m)\vee  -m$ , and we remark that $\beta_m \circ \beta_p=
\beta_p$, if $p \geq m$. For all $m \in \NN$, there exists a unique
solution of  (\ref{liou_nc})-(\ref{init_nc}) corresponding to the initial condition
$\beta_m(f^0)$. We denote it by $f_m$. For every $p \in \NN$, $f_p$ is a
renormalized solution on the whole space, then $\beta_m(f_p)$ is a
solution with intitial conditions
$\beta_m(\beta_p(f^0))=\beta_m(f^0)$. Of course, $\beta_m$ do not
belongs to $\CCC^1_b(\RR)$ but it can be shown that the
renormalisation property is still true for Lipschitz function by
regularisation of those functions (see \cite{Bou}). Then, by the uniqueness of the solution
of (\ref{liou_nc})-(\ref{init_nc}) when the initial condition belongs to $\Linf$,
we obtain that $\beta_m(f_p)=f_m$, for all $p \geq m$. This allows us
to define almost everywhere 
$$f=lim_{m \rightarrow \infty} f_m$$ 
This measurable function $f$ satisfies $\beta_m(f)=f_m$ for all $m \in
\NN$. And $f$
is a renormalized solution corresponding to the initial condition $f^0$ because for
every $\beta \in \CCC^1_b(\RR)$, $\beta \circ \beta_m(x)$ goes to
$\beta(x)$ a.e. in $x$ when $m$ goes to $+\infty$. Then, the solution
$\beta \circ \beta_m(f)$ of (\ref{liou_nc})-(\ref{init_nc}) with the initial
condition $\beta \circ \beta_m(f^0)$ goes a.e. to $\beta(f)$ which is
still a solution of (\ref{liou_nc})-(\ref{init_nc}) with the initial
condition $\beta(f^0)$, because this linear equation is always
satisfied by a weak limit of solutions. This shows the existence of the solution. 
For the uniqueness, if there existed two solutions $f,g$ for the same
initial conditions $f^0$, there would be a $m \in \NN$ such that
$\beta_m(f) \neq \beta_m(g)$, and $\beta_m(f)$,$\beta_m(g)$ would be
two distinct solutions in $\Linf$ with the same initial
conditions. This would contradict the uniqueness of solutions already
proved in that case. 

Finally, we remark that we can say something about the
integrability of the solution $f$. Since the speed of propagation is finite on the sets of
bounded energy, $f \tr \in \Linfloc(\RR, \Lunloc)$, for all $m \in
\RR$, where  
$\chi_{E<m}$ denote the characteristic function of the set of all the 
configurations with an energy less than $m$. 
\end{proof}

\begin{thm}
Assume now that $\nabla V \in BV_{loc}(\RR^d-0)$, that $V$ is bounded
on all compact sets of $\RR^d -0$, that $V$
satisfies $V(x) \geq C(1+|x|^2)$ a.e. and that $V$ goes to
$+\infty$ when $|x|$ goes to $0$. Then, there exists a unique
renormalized solution of (\ref{liou_nc})-(\ref{init_nc}). 
\end{thm}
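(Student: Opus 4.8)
The plan is to follow the four-step architecture of the previous theorem, but to replace the role played there by the hypothesis $\nabla V \in \Lunloc$ near the origin (which secured the equivalence of the two notions of solution) with a confinement argument based on the strong repulsion $V \to +\infty$ at collisions. The guiding observation is that, since $V$ is bounded on compact subsets of $\RR^d - 0$ while $V(x_i - x_j) \to +\infty$ as $x_i \to x_j$, conservation of the energy $E$ forces bounded-energy configurations to stay uniformly away from the collision set $I$. Concretely, for each $m$ there is $\delta_m > 0$ with $\{E \leq m\} \subset \{|x_i - x_j| \geq \delta_m \text{ for all } i \neq j\}$, so that $\{E \leq m\}$ lies at positive distance from $I$. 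This separation is the substitute for the missing integrability of $\nabla V$ near the origin.

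First I would record this separation estimate and deduce its key consequence: any energy cutoff $\psi(E/m)$ is supported in $\{E \leq 2m\}$ and hence vanishes in a full neighborhood of $I$. Therefore every test function of the form (smooth, compactly supported) $\times\,\psi(E/m)$ is admissible in the sense of Definition 1, since in the integrand of (\ref{dist}) the factor $\nabla V(x_i-x_j)$ is met only where it is locally $BV$, never at a singularity. This means the whole analysis can be carried out intrinsically with test functions avoiding $I$; unlike in the previous theorem, I will \emph{not} try to show that a solution is a solution on the whole space, because the products $\nabla V(x_i-x_j)\cdot\nabla_{v_i}\phi$ need not be integrable for general $\phi \in \Com$.

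Next, for renormalization I would localize to a fixed energy shell. Since renormalization is a local property and on $\{E < m\}$ the force field $(\nabla V(x_i - x_j))_{i\neq j}$ is genuinely $BV_{loc}$ with no singularity in view, Theorem \ref{bouchut} applies there and yields that $\beta(f)$ solves the equation when tested against functions supported in $\{E < m\}$; letting $m \to \infty$ shows that every bounded solution is a renormalized solution in the sense of Definition 2. For uniqueness I would reuse the Gronwall-type argument of Step 3 of the previous theorem almost verbatim: with $h = \beta(f-g)$ for a nonnegative $\beta \in \CCC^1_b(\RR)$ satisfying $\beta(0)=0$, and the test function $\phi_{R,T} = \psi(\sqrt{1+\sum|x_i|^2} - (R+1)e^{C'(T-t)} - 2)\,\psi(E/R^2)$, the quadratic lower bound on $V$ again confines the speeds on bounded sets and makes $\partial_t(\int h\,\phi_{R,T}) \leq 0$. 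The only new point is that the factor $\psi(E/R^2)$ now guarantees that $\phi_{R,T}$ vanishes near $I$, so $\phi_{R,T}$ is an admissible test function despite the singular force; the cancellations exploiting the invariance of $E$ are identical.

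Finally, existence for $f^0 \in \Linf$ follows by regularizing $\nabla V$ away from the origin, solving the smooth problems (whose flow preserves Lebesgue measure because the force is independent of $v$), and passing to a weak-$*$ limit, the limiting identity being tested only against $I$-avoiding functions on whose support the regularized forces converge. The passage from $\Linf$ to $\Lunloc$ data is then word for word the truncation argument of Step 4 of the previous theorem, using $\beta_m(x) = (x \wedge m) \vee -m$ together with the uniqueness just established. The main obstacle, and the place where the repulsion hypothesis is indispensable, is the admissibility claim: one must verify that the energy-localized test functions genuinely vanish in a neighborhood of $I$ and carry enough regularity (with $\nabla_{x_i}\phi_{R,T}$ involving $\nabla V$ only on the compact, $I$-free support) for the weak formulation to close. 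Once that is secured, the singularity of $\nabla V$ at the origin never interacts with the test functions and the proof reduces to the already-understood $BV_{loc}$ theory.
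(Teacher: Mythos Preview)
Your proposal is correct and follows essentially the same route as the paper: the key mechanism in both is that the energy cutoff $\psi(E/R^2)$, combined with $V\to+\infty$ at the origin (and a spatial cutoff bounding the other potential terms from below), forces the support of $\phi_{R,T}$ away from $I$, so the uniqueness argument of the previous theorem carries over verbatim with admissible test functions; renormalization then comes for free from Bouchut's theorem on the open set $\RR^{2dn}\setminus I$.

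The one place where the paper is more explicit than your sketch is the existence step. Rather than vaguely ``regularizing $\nabla V$ away from the origin,'' the paper convolves $V$ with a kernel whose radius depends on $x$, namely $V_n(x)=\int V(y)\,\rho_{2^{-n}\alpha(x)}(x-y)\,dy$ with $\alpha(x)\le\min(1,|x|/2)$, so that the averaging ball never contains the origin. This produces $V_n\in C^\infty(\RR^d\setminus 0)$ which \emph{still blows up at $0$}; hence, by conservation of the regularized energy, the classical characteristics of the approximate problem can neither reach $I$ nor escape to infinity in finite time, and one gets a genuine global smooth flow before passing to the weak-$*$ limit. Your sketch needs some device of this kind to guarantee global solvability of the ``smooth problems'' you invoke; once that is supplied, the two arguments coincide.
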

\begin{proof}
  The proof will follow the same sketch that the one of the theorem
  2, but the difficulties are at others places. First, the existence
  of solution by regularisation is not so obvious here, because we
  cannot work on the whole space.

{\bf Step 1.} Existence of solution with initial condition in
$\Linf$. \\
  We choose a smooth $f^0 \in \Linf$. We shall show the existence of a
  solution with this initial condition by regularisation. We choose a
  regularisation kernel $\rho \in \Com(\RR^d)$, such that $Supp(\rho)
  \subset B_1$, and that $\int \rho =1$. We also choose a smooth
  function $\alpha$ from $\RR^n$ into $\RR$ satisfying $\alpha(x) \leq
  min(1,|x|/2)$ for all $x$. We denote $\rho_{\epsilon}= \rho(\cdot
  /{\epsilon})$ and define for all integer $n \geq 1$
$$ V_n(x)= \int_{\RR^d} V(y) \rho_{2^{-n}\alpha(x)}(x-y)\,dy $$
It is a sort of convolution, in which the radius of the ball on which
we average $V$ depends on $x$ so that $0$ is never in that ball. Hence
$V_n$ is well defined in $\RR^d - 0$, belongs to $\Com(\RR^d -
0)$ and satisfies also $V_n(x) \rightarrow +\infty$ when $|x|
\rightarrow 0$. Moreover, $\nabla V_n \rightarrow \nabla V$ in $BV_{loc}(\RR^d
-0)$ when $n \rightarrow \infty$.

  Then, if $Y=(X,V)$ is such that $X \notin I$, there exists a unique maximal
solution to the ODE with value $(X,V)$ at time $t=0$. Because of the 
conservation of the energy, it cannot reaches $I$ and because of property of 
$V_n$, it cannot go to infinity in a finite time. Then, this maximal solution 
is defined for every time. This allows us to define a
smooth flow $Y_n(t,\cdot)$ in $\RR^{2dn}-I$. And $f_n=f^0(Y_n)$
satisfies the Liouville equation in the classical sense on
$\RR^{2dn} -I$. Then, $f_n$ also satisfies (\ref{dist}), for all test
functions $\phi \in \Com(\RR^{2dn}-I)$. 

Moreover, the sequence $(f_n)$ is bounded by $\|f^0\|_{\infty}$ in
$\Linf$, then, up to an extraction, we can assume that $f_n
\rightarrow f$ weakly in $\Linf-w \ast$. And we can pass to the limit in
(\ref{dist}) and obtain that $f$ is a solution of
(\ref{liou_nc})-(\ref{init_nc}). For non smooth initial condition  $f^0 \in
  \Linf$, we obtain the existence of the solution by regularistion of
  $f^0$ and by taking weak limit.

{\bf Step 2.} Uniqueness of solution for initial conditions in $\Linf$. \\
Here we choose an $h$ solution of (\ref{liou_nc}) with vanishing intial
conditions. Now, with our assumption that $V(x) \rightarrow +\infty$
when $|x| \rightarrow 0$, the support of function is included in
$\RR^{2dn}-I$. As in the proof of the theorem 2, we may write the
equations (\ref{deriv1}) and (\ref{deriv2}), and not only for test functions vanishing on
$I$, but for every smooth functions with compact support in
$\RR^{2dn}$, because of the property of the support of $\phi_{R,T}$. 
So, we obtain (\ref{argh}), and then that, 
$$ \frac{\partial}{\partial t} \left({\int_{\RR^{2dn}}
    h(t,\cdot)\phi_{R,T}}\right) \leq 0 $$
And this implies the uniqueness of the solution. 

The last step about existence and uniqueness of the solution with
initial conditions in $\Linfloc$ is the same that in the theorem 2.
\end{proof}

\section{Resolution of the ordinary differential equation}

  We are now looking for a solution to the ODE associated to the transport
equation. Since the vector-field used in this ODE is not defined
everywhere, we cannot solve this ODE for every initial condition. Then, we 
will solve it globally with a flow, namely a application $Y$ from $\RR 
\times 
\RR^{2dn}$ to $\RR^{2dn}$ such that $Y(t,Y_0)$ is the position in the 
phase space at time $t$ when we start from $Y_0$ at time $0$. Of course, 
this flow will be defined only almost everywhere. Here we use the notation 
$Y=(X,V)=(x_1,\dots,x_n,v_1,\dots,v_n)$, where $X$,$V \in \RR^{dn}$ and 
$x_i$,$v_i \in \RR^d$ for all $i$. This flow shall solve the 
following system
\begin{equation} \label{flow}
\left\{ \begin{array}{l} 
\dot{x_i}(t,Y) = v_i \\
\dot{v_i}(t,Y)= - \sum_{j\neq i} \nabla V(x_i - x_j) \\
Y(0,X,V)=(X,V) \end{array} \right.
\end{equation}
  If we denote by $B$ the vector-field defined below on $\RR^{2dn}$,
  we may rewrite the two first equations 
$$\dot{Y}=B(Y)$$
$$ \text{where} \quad B(Y)= (v_1,\dots,v_n,- 
\sum_{j\neq 1}
\nabla V(x_1 - x_j),\dots,- \sum_{j\neq n} \nabla V(x_n - x_j)) $$
  In our situation of a vector field with low regularity, we have to say 
more precisely what 
we will mean by a flow, and in which sense we look at the equation 
(\ref{flow}). This is the aim of the following definition, in which
$\chi_{E<m}$ denote the characteristic function of the set of all the 
configurations with energy less than $m$. 

\begin{defn}
A flow defined almost everywhere (a.e. flow) solution of the ODE
(\ref{ODEnpart}) is a function $Y$ from $\RR \times \RR^{2dn}$ to
$\RR^{2dn}$ such that
\begin{itemize}
\item[i.] $Y \tr \in \CCC(\RR, \Lunloc)^{2dn} \cap
  \Linfloc(\RR^{2dn+1}), \; \forall m \in \RR$
\item[ii.] $\int \phi(Y(t,X,V))\,dXdV = \int \phi(X,V)\,dXdV, \quad 
\forall \phi \in \Com, \quad \forall t \in \RR$
\item[iii.] $Y(t+s,Y')=Y(t,Y(s,Y'))$ a.e. in $Y', \quad \forall s,t \in 
\RR$
\item[iv.] $E \circ Y(t,Y') = E(Y')$ (the energy is preserved by the 
flow).
\item[v.] $\dot{Y} \tr =B(Y) \tr$ is satisfied 
in the sense of the distributions for all $m \in \RR$, and 
$Y(0,X,V)=(X,V)$ a.e. on $\RR^{2dn}$.
\end{itemize}
\end{defn}

{\bf Remark.} We use the truncation $\tr$ because in the 
region where $E$ is large, the particles may go to infinity very quickly 
and we cannot expect $Y$ to be integrable. It has the avantage to allow us 
to give a sense to the EDO without using renormalization, like in 
\cite{DPL}. But, this definition is not completely satisfactory because we 
like part iv. to be a consequence of the others points, but I do not 
know how to do this. 

  Using the results of the first section, we will prove the existence and 
the 
uniqueness of an a.e. flow in the two case seen above. For this, we use 
the method introduced by  R. DiPerna and P.L. Lions in \cite{DPL}. Indeed, we 
just adapt the argument introduced in \cite{Lio} for periodic vector-fields.

\begin{thm}
Under the two kind of assumptions made in the section 1, 
there exists a unique a.e. flow solution of (\ref{flow}).
\end{thm}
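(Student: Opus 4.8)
The plan is to transfer the results of Section 2 from the Eulerian picture (the Liouville equation) to the Lagrangian one (the ODE), following the method of DiPerna and Lions as adapted in \cite{Lio}: every property of the flow, as well as its uniqueness, will be read off from the existence and uniqueness of renormalized solutions furnished by Theorems 2 and 3. As in the proof of Theorem 3 (and by a direct mollification in the setting of Theorem 2), I would first choose smooth potentials $V_n$ with $\nabla V_n \to \nabla V$ in $BV_{loc}(\RR^d-0)$, hence smooth vector fields $B_n$. The key structural fact is that $B$, and each $B_n$, is divergence free: the component $v_i$ does not depend on $x_i$, and the force $-\sum_{j\neq i}\nabla V(x_i-x_j)$ does not depend on the velocities, so $\diver B=0$. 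Using energy conservation to forbid both collisions (approach to $I$) and escape to infinity in finite time, exactly as in Theorem 3, each $B_n$ admits a smooth, measure-preserving flow $Y_n(t,\cdot)$ conserving the regularized energy; this provides approximate versions of (ii) and (iv).

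The heart of the proof is to show that the truncated flows $Y_n\tr$ converge in $\CCC(\RR,\Lunloc)$ to a limit $Y\tr$, and I would deduce this from the uniqueness of renormalized solutions. For $\phi\in\Com$, the functions $\phi(Y_n(t,\cdot))$ are uniformly bounded solutions of the Liouville equation with datum $\phi$, so by Theorems 2 and 3 they converge to the unique renormalized solution of the limiting equation. The truncation $\tr$ is what makes the a priori unbounded coordinate functions integrable: on $\{E<m\}$ the finite-speed-of-propagation estimate keeps $Y_n$ bounded uniformly in $n$ and in $t$ on compact time intervals. Weak convergence of $Y_n\tr$, together with the convergence of the nonlinear renormalized functionals forced by uniqueness, then upgrades to strong convergence in $\CCC(\RR,\Lunloc)$, as in the classical DiPerna--Lions argument. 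I expect this to be the main obstacle, since the vector field is only $BV_{loc}$ and singular on $I$: one must combine Bouchut's renormalization theorem (Theorem 1) with the energy truncation to close the argument uniformly away from the collision set.

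Granting the limit $Y$, property (i) follows from this mode of convergence and the uniform $\Linfloc$ bound on $\{E<m\}$, while (ii) and (iv) pass to the limit from the $Y_n$. The semigroup identity (iii) follows from uniqueness: $Y(t+s,\cdot)$ and $Y(t,Y(s,\cdot))$ solve the same transport problem with the same data, hence agree a.e. Property (v), namely $\dot Y\tr=B(Y)\tr$ in the distributional sense with $Y(0,\cdot)=\mathrm{id}$, is obtained by passing to the limit in $\dot Y_n=B_n(Y_n)$ using the strong convergence of $Y_n\tr$ and the $\Lunloc$ convergence of $B_n$. For uniqueness, let $Y$ and $\tilde Y$ be two a.e. flows; transporting a bounded $f^0$ along each produces, by (v) and measure preservation, a renormalized solution of the Liouville equation with datum $f^0$, and these coincide by Theorems 2 and 3. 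A change of variables turns this identity into $\int f^0(z)\,g(Y(t,z))\,dz=\int f^0(z)\,g(\tilde Y(t,z))\,dz$ for all $f^0,g\in\Com$, which forces $Y=\tilde Y$ almost everywhere and completes the proof.
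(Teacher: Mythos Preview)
Your approach is correct in outline but proceeds differently from the paper. You build $Y$ as a strong limit of smooth regularized flows $Y_n$ via a DiPerna--Lions stability argument; the paper instead \emph{defines} $Y$ directly as the renormalized solution of the Liouville equation with initial datum $f^0(Y')=Y'$ (furnished by Theorems~2--3) and then verifies properties (i)--(v) by manipulating the transport equation. The central device there is an extension of the renormalization property to continuous functions of several solutions, from which one deduces that $f^0(Y(t,\cdot))$ is the solution with datum $f^0$ for every $f^0\in\Com$: mass conservation then gives (ii), (iii) follows from uniqueness applied to $Y(t+s,\cdot)$ and $Y(t,Y(s,\cdot))$, (iv) from the fact that $E\circ Y$ and $E$ solve the same problem, and (v) by a change of variables using (ii) and (iv). Uniqueness is argued, much as you do, by showing from (i)--(v) alone that each coordinate $Y_i$ of any a.e.\ flow solves the Liouville equation. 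Your compactness route is closer to the original DiPerna--Lions scheme and more constructive, but it requires one point of care you pass over: the truncation $\tr$ is built from the limiting energy $E$, whereas $Y_n$ conserves only the regularized energy $E_n$, so the uniform bound on $Y_n$ over $\{E<m\}$ that you invoke is not automatic---you need $\{E<m\}\subset\{E_n<m'\}$ eventually, via $E_n\to E$ locally uniformly away from $I$. The paper's direct construction avoids this altogether by working with the limiting $E$ from the start.
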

\begin{proof}
  We first remark, that in the case when $V$ is smooth, a flow solution of 
an ODE is 
also a solution of the transport equation (more precisely each component 
$Y_i$ is the unique solution corresponding to the initial condition 
$f^0(Y)=Y_i$). Here, we will use this remark, and the fact 
that we know how to solve the transport equation. 
We thus denote by $Y$ the solution of the transport equation (\ref{liou_nc})  
for the initial condition $f^0(Y)=Y$. We will prove that this 
defined an a.e. flow solution of (\ref{flow}). 

In the first section, we have shown that $Y$ is a renormalized solution of
(\ref{liou_nc}). Let us recall that it means that $\beta(f)$ is a solution of 
the Liouville equation, for every $\beta \in \CCC^1$. But here the initial 
condition belongs to $\Linfloc$. And we point out that in both cases of 
section 1, we have proved that the speed of propagation is finite on the
set where the energy is bounded. Then, assume that $f$ is a solution with
an initial condition given by $f^0 \in \Linfloc$. We choose a smooth
function $\psi \in
\Com(\RR)$. We may prove adapting the argument made in section 1,
that for every $R$ and $T \in \RR$, there exists a constant $R'>0$ such
that
$$ \int_{|x_i|,|v_i|
  \leq R} \beta(f(t,Y))^n \psi^n(E)\,dY \leq \int_{|x_i|,|v_i|
  \leq R'} \beta(f^0)^n \psi^n(E)\,dY $$
for all $\beta \in \CCC^1_b$, and all $n \in \NN$.
Since this is true for all $n$ and all $\beta$ we obtain that for every $m
\in \RR$, $f \tr \in \Linfloc(\RR^{2dn+1})$. This implies that $f \tr$
is a solution (not only a renormalized solution) of (\ref{liou_nc}).

Next, we shall show that we can extend the renormalisation property to
functions of several variables. More precisely, if $G \in \CCC(\RR^{k})$
and $f_1,\dots,f_k$ are solution in $\Linfloc$, then
$G(f_1,\dots,f_k)$ is also a solution of the same equation, with
initial conditions $G(f_1^0,\dots,f_k^0)$. Let us show the proof for $k=2$
for example.

Thus, take $f$ and $g \in \Linfloc$
two solutions of the transport equation (\ref{liou_nc}). Next, $(f+g)$,
$(f-g)$ are also solutions by linearity, and so are $(f+g)^2$,$(f-g)^2$,
and finally $fg=(1/4)[(f+g)^2-(f-g)^2]$. Doing this again, we can show
that P(f,g) is also a solution for all $P$ polynomial in two variables. 
And using the density of the polynomials, we finally obtain that G(f,g) is 
a solution for every continious $G$.

Then, for all $f^0 \in \Com(\RR^{2dn})$, $f^0(Y(t,Y'))\chi_{E<m}$ is the 
solution with initial conditions $f^0\chi_{E<m}$. Letting $m$ going to 
$\infty$, we obtain that $f^0(Y(t,Y'))$ is the solution with initial 
conditions $f^0$. And this is true for every $f^0 \in \Linf$ by 
approximation.
  Next, since the Liouville equation preserves the total mass, we obtain 
that $\int f(Y(t,Y'))\,dY'=\int f(Y')\,dY'$, for every smooth $f$. This
implies the part ii. of the definition of an a.e. flow (the conservation 
of the Lebesgue 
measure). 
  
  For the group property $Y(s+t,Y')=Y(t,Y(s,Y'))$ a.e. in $Y'$, we choose a fixed $t$ 
and a sequence of smooth function going to $Y(t,\cdot)$ in $\Lunloc$. 
Because of the part ii. of the definition, $f(Y(s,\cdot))$ goes to
$Y(t,Y(s,\cdot))$. But, since $f$ goes to $Y(t,\cdot)$ in $\Lunloc$,
$f(t,\cdot)\tr$ goes in $\Lunloc$ to the solution of (\ref{liou_nc}) with
initial conditions $Y(t,\cdot)\tr$ at time $s$. This is
$Y(s+t,\cdot)\tr$. And the group properties follows.

To show that the energy $E$ is invariant by the flow (part iv.),
remark that $E \circ Y$ and $E$ are two solutions of (\ref{liou_nc}) with
the same initial conditions $E$. Then, they are equal.

  In order to show the part v., we choose $\phi \in \Com(\RR^{2dn})$ and 
$\psi \in \Com(\RR)$. We will use the function $\phi\psi$ as test
function. It is sufficient to use only this type of functions to show
that $f$ satisfy the equation, because linear combinations of such
functions are dense in the space $\CCC^1_o(\RR\times \RR^{2dn})$. We
compute for all $i \leq 2dn$, where the index $i$ denote the $i-th$
component of vector in $\RR^{2dn}$ 
\begin{gather*}
\begin{split}
 \int_{\RR \times \RR^{2dn}} Y_i(t,Y') & \tr \phi(Y')\frac{\partial
  \psi}{\partial t} (t)\,dYdt \\  
 & =  \int_{\RR \times \RR^{2dn}}
  Y_i(-t,Y')\tr \phi(Y')\frac{\partial \psi}{\partial t}(-t)\,dY'dt \\
 & =  \int_{\RR \times \RR^{2dn}}
  \phi(Y(t,Y')) \tr Y'_i\frac{\partial \psi}{\partial t}(-t)\,dY'dt
\end{split}
\end{gather*}
To obtain the second equation from the first, we use the change of
variable $Y(t,\cdot)$. And we remark that $\tr \circ Y = \tr$, since the 
energy is invariant by the flow. 

Moreover, we know that $\phi(Y(t,Y'))$ is the solution of the
transport equation (\ref{liou_nc}) with initial conditions $\phi$. We use
this to write 
\begin{gather*}
\begin{split}
 \int_{\RR \times \RR^{2dn}} Y_i(t,Y') & \tr \phi(Y')\frac{\partial
\psi}{\partial t}(t)\,dY'dt \\ 
 & = - \int_{\RR \times
\RR^{2dn}}\phi(Y(t,Y')) \tr  B_i(Y') \psi(-t)\,dY'dt \\
 & = -\int_{\RR \times \RR^{2dn}} B_i(Y(-t,Y')) \tr 
\phi(Y')\psi(-t)\,dY'dt \\
 & = -\int_{\RR \times \RR^{2dn}} B_i(Y(t,Y')) \tr 
\phi(Y')\psi(t)\,dY'dt \\
\end{split}
\end{gather*}
And this shows the part iv.. Therefore, the existence of such a solution 
is proven. Remark that in the second case we can
delete $\tr$ if we only use test functions whose support does not
contain $0$.

For the uniqueness of the a.e. flow, we will show that the five
properties satisfied by this flow implies that all his components are
solutions of the Liouville equation. This is sufficient to show the
uniqueness of an a.e. flow because we already know the uniqueness of
the solution of the Liouville equation.

We choose $\phi \in \Com(\RR^{2dn})$ and $\psi \in \Com(\RR)$ and use
$\phi \psi$ as test function. We have for all $i \leq 2dn$ that
\begin{gather*}
\begin{split}
 \int_{\RR \times \RR^{2dn}} Y_i(t,Y')\phi(Y') & \tr\frac{\partial
  \psi}{\partial t}(t)\,dY'dt \\ 
 & = \int_{\RR \times \RR^{2dn}} Y_i'
  \phi(Y(-t,Y'))\tr\frac{\partial \psi}{\partial t}(t)\,dY'dt  \\
 & =  -\int_{\RR \times \RR^{2dn}} Y'_i \tr\frac{\partial}{\partial t}\big(
  \phi(Y(-t,Y')) \big) \psi(t)\,dY'dt
\end{split} 
\end{gather*}
 In the first equality we use the change of variable $Y'=Y(t,Y')$, and the
second one is deduced by an integration by parts. Remark that we use 
the preservation of the energy by the flow in every change of variable. 
But we can show that in a $\Lunloc$-sense,
$$  \frac{\partial}{\partial t}\big(\phi(Y(-t,Y'))\tr \big) =
-\nabla\phi(Y(-t,Y')) \cdot B(Y(-t,Y'))\tr$$
This, because for $t$ fixed, $\frac{Y(t+h,Y')-Y(t,Y')}{h}\tr \rightarrow
B(Y(t,Y'))\tr$ in $\Lunloc(\RR^{2dn})$ when $h \rightarrow 0$.
Let us show this fact. Indeed, if we look at the five properties satistied
by an a.e. flow, we can show that
$$ Y(t,Y')\tr=Y' \tr + \int_0^t B(s,Y(s,Y')\,ds  \quad \text{a.e. in Y',
 } \forall t \in \RR.$$
It remains to show that $\tr B(Y) \in \CCC(\RR, \Lunloc)$ to obtain the
result. For this, if $B$ is replaced by a smooth and bounded
$B_{\epsilon}$, this is true, because $Y\tr \in \CCC(\RR,\Lunloc)$. And
this is still true for $B$ because $Y$ preserves the Lebesgue measure and
because the energy is preserved by the flow.

 Then, we obtain if we use the change of variables backwards
\begin{multline}
\int_{\RR \times \RR^{2dn}} Y_i(t,Y')\tr \phi(Y')\frac{\partial
\psi}{\partial t}(t)\,dY'dt   \\ =   \int_{\RR \times \RR^{2dn}}
Y_i(t,Y') \tr  B(x) \cdot \nabla \phi(x) \psi(t)\,dY'dt
\end{multline}
And $Y_i(t,Y')$ satisfies (\ref{liou_nc}) and the proof is complete.

\end{proof}

\def\cprime{$'$} \def\cprime{$'$}


\begin{thebibliography}{1}

\bibitem{Amb}
L.~Ambrosio.
\newblock Transport equation and {C}auchy problem for {B}{V} vector field.
\newblock preprint, 2003.

\bibitem{Bou}
F.~Bouchut.
\newblock Renormalized solutions to the {V}lasov equation with coefficients of
  bounded variation.
\newblock {\em Arch. Ration. Mech. Anal.}, 157(1):75--90, 2001.

\bibitem{BD}
F.~Bouchut and L.~Desvillettes.
\newblock On two-dimensional {H}amiltonian transport equations with continuous
  coefficients.
\newblock {\em Differential Integral Equations}, 14(8):1015--1024, 2001.

\bibitem{DPL}
R.~J. DiPerna and P.-L. Lions.
\newblock Ordinary differential equations, transport theory and {S}obolev
  spaces.
\newblock {\em Invent. Math.}, 98(3):511--547, 1989.

\bibitem{Hau1}
M.~Hauray.
\newblock On two-dimensional {H}amiltonian transport equations with {$L\sb {\rm
  loc}\sp p$} coefficients.
\newblock {\em Ann. Inst. H. Poincar\'e Anal. Non Lin\'eaire}, 20(4):625--644,
  2003.

\bibitem{Lio}
P.-L. Lions.
\newblock Sur les \'equations diff\'erentielles ordinaires et les \'equations
  de transport.
\newblock {\em C. R. Acad. Sci. Paris S\'er. I Math.}, 326(7):833--838, 1998.

\end{thebibliography}
\end{document}